\newtheorem{theorem}{\noindent Theorem}
\newtheorem{definition}{\noindent Definition}
\newtheorem{corollary}{\noindent Corollary}
\newtheorem{conjecture}{\noindent Conjcture}
\newtheorem{statement}{\noindent Proposition}
\vershik\url{vershik@pdmi.ras.ru}
\title{SMOOTH AND NON-SMOOTH $AF$-ALGEBRAS AND PROBLEM ON INVARIANT MEASURES.}
\date{APRIL 1, 2013}
\begin{document}
\maketitle

\abstract{We separate the $AF$-algebras (correspondingly action of the countable groups on Cantor sets)
onto two classes ---- "completely smooth" for which the set
of all indecomposable traces (correspondingly list of all invariant ergodic measures) has nice parametrization,
and the second class --- "completely non-smooth" for which the set of all traces (correspondingly set
of all invariant measures) is Poulsen simplex and therefore there is no suitable parametrization of indecomposable traces,
(ergodic measures) e.g.Choquet boundary.
 Important example of the first type of $AF$-algebra is group algebra of infinite symmetric group, and of the second type
  --- group algebra of some locally finite solvable group. The questions of recognition of those cases are closely related
 to the orbit theory of dynamical systems and theory of filtrations of sigma-fields in measure spaces and Borel spaces.

\bigskip
\bigskip

\textbf{CONTENT}

\smallskip

1. \texttt{INTRODUCTION}

2. \texttt{CENTRAL MEASURES, TRACES, CHARACTERS}

3. \texttt{DEFINITION OF NON-SMOOTH AND COMPLETELY SMOOTH $AF$-ALGEBRAS}

4. \texttt{NON-SMOOTH $AF$-ALGEBRAS AND ACTIONS WITH MANY INVARIANT MEASURES}

5. \texttt{COMPLETELY SMOOTH $AF$-ALGEBRAS}

6. \texttt{CONJECTURE}

\section{INTRODUCTION}
This is the detailed version of the part of my talk on the School on Probability and Statistical Physics
at St.Petersburg (June 2012). I have discussed several problems about the combinatorial and probabilistic structures
which are related to representation theory of the locally finite groups and locally semi-simple
and $AF$-algebras, as well of asymptotic combinatorics and dynamics.  In this note I discuss only one problem of that
list, which I consider as the main one; this problem has deal with the description of the characters of locally finite
groups, traces of  $AF$-algebras, and with invariant measures for ergodic equivalence relations.
It connected with various mathematical areas. Most known example of this type is the problem about the characters of
infinite symmetric group. More general setting is the theory of the central measures on the set of paths of Bratteli
diagram (f.e. Young graph) and the the problem about description of invariant probability measures with respect
to the action of the countable group (for example adic transformation)

The goal this text is to distinguish so called "completely smooth" and "completely non-smooth" cases in the problems
and correspondingly distinguish the $AF$-algebras onto several classes. Our classes can
be formally expressed in terms of $K$-theory, namely, of $K_0$-functor and its dual $K^0$ (\cite{VK}). But we will
not use this language and try to formulate the tools which can be easily calculated. We show some typical examples
of those classes and formulate a Conjecture which
perhaps will help to solve some old problems. In particulary we give the prediction which concerns to the
list of characters for many dimensional Young graphs.

\smallskip
    More concretely, our goal is to investigate the structure of the invariant measures on the Cantor sets,
    rather than explicit formulas for them. We explain the difference between two classes: for non-smooth case
    there are no reasons to expect regular formulas for the traces or invariant measures. But it is important
    to know whether given case is smooth or not and to give a criteria for that. I have started to think about
    two-dimensional (ordinary) Young graph) just after I became familiar with Thoma's result
    and tried to understand when and why such nice answer exists.

  In particular I want to emphasize that the pure combinatorial (or probabilistic) proof even
  for $AF({\frak S}_{\infty})$-algebra (=existense of borel parametrization of the list of indecomposable characters)
  does not exist up-to now. I mean the question here is not about precise formulas for invariant (central) measures
  or traces for $AF$-algebras, which is rather concrete problem, but about the reason why such formula can exist
  and what are the sufficient conditions for existence.

  More deep features of the problem is discovered if we connect it with the orbit theory and with the theory
  of decreasing sequences of sigma-algebras (filtration). Shortly, the difference between smooth and non-smooth cases
  look as a difference between standard and nonstandard semi-homogeneous filtrations.

 We conclude the Introduction with the claim that the problem of the description of the traces of $AF$-algebra
 (characters of the locally finite groups) is the analog of Fourier duality in classical analysis.

  In the second paragraph we explain the problem and the links between traces of $AF$-algebras and invariant measures.
  Then we give the main definitions and in fourth and fifth paragraphs consider non-trivial examples -
  smooth ---  the case of group algebra of symmetric group (Young graph as Bratteli diagram) and non-smooth --- group
  algebra of second order of solvability locally finite group. In the last paragraph we formulate
  our conjecture about smoothness of the case of Bratteli diagram which are Hasse diagram of the countable
  distributive lattices in particular - many dimensional Young graphs.
\bigskip
\bigskip

\section{CENTRAL MEASURES, TRACES, CHARACTERS, ACTIONS.}
 We will mention briefly some notions: Bratteli diagrams, space of the paths, $AF$-algebra, skew-product structure,
 central measures, traces etc. It is easy to find this in the literature (see f.e.\cite{VK} and references their).

 Recall that Bratteli diagram is a locally finite $\Bbb N$-graded graph $\Gamma$ with one initial vertex $\emptyset$,
 with no final vertices and edges (could be multiple with finite multiplcity) which join the vertices of successive levels.
 The important object is the space  $T(\Gamma)$ of all infinite pathes of the graph $\Gamma$ - this sequence of edges (not
 vertices!) started in the from initial vertex without breaks. This space has natural structure of inverse limit
 of the finite sets and consequently has topology of Cantor space. A cylindric complex functions on $T(\Gamma)$
 are the functions which depend on finite parts of the path. The algebra of all cylindric functions on $T(\Gamma)$
 with pointwise multiplication called Gel'fand-Zetlin $GZ(\Gamma)$ this is commutative algebra (see \cite{OV})
 which is maximal if there is no multiplicity of edges. Because $\Gamma$ is $\Bbb N$-graded graph it supplied
 with so called \emph{tail equivalence relation, and correspondingly, tail partition} --- two paths are equivalent
 if they coincided at infinity, more exactly, both paths have the same vertices on the sufficiently large levels.
 Denote this tail partition on the space of the paths as $\xi$. In the interesting cases this is not measurable
 partition. It is possible  to define a transformation of the path space, whose orbit partition coincides
 with tail partition -I mean  \emph{adic transformation} which depend on ordering of the set of edges coming
 to the each vertex. For our goals instead of Bratteli diagrams we can consider Cantor set with filtration
 of all cylindric functions and tail equivalence relation.

 Thus instead of Bratteli diagram and the  space of paths we can consider a standard Cantor space with a filtration
 in the space of all cylindric functions corresponding tail partition and invariant (homogeneous) measures on that
 partition. It happened that we are in the situation with the decreasing sequences of cylindric (measurable)
 partitions in the Cantor set (as topological space), and we can use the analogy with measure-theoretical results
 about decreasing sequences of the partitions (see \cite{V94}). We use this analogy below.

\medskip
  But first of all recall the connection of combinatorics with  $AF$-algebras. It is well-known that each Bratteli
  diagram $\Gamma$ generates canonically a locally-semi simple $\Bbb C$-algebra ${\Bbb C}(\Gamma)$.
  By theorem which independently was proved in \cite{SV,VK} this algebra has natural structure of skew-product
  of Gel'fand-Zetlin subalgebra $GZ(\Gamma)$ and some group $G$ of the tail-preserving transformations of space
  $T(\Gamma)$. We can choose as that group the group $\Bbb Z$, generated by one transformation which is defined
  on the set of almost all paths --- adic transformation \cite{V81}.
The completion of this algebra $C(\Gamma)$ with respect to $C^*$-norm is so-called {\it approximately finite-dimensional
algebra --- $AF$-algebra}. We denote it as $AF(\Gamma)$. The theory of $AF$-algebras considers the connections between
properties of the graph $\Gamma$ itself and algebra  $AF(\Gamma)$, including $K$-theory, classification and
so on \cite{V86,GPS}. But here we consider only very rough difference between  $AF$-algebras.

\begin{definition}
The central measure on the space of paths $T(\Gamma)$ of the graded graph  $\Gamma$ is a Borel probability measure
on $T(\Gamma)$ which is invariant with respect to partition $\xi$. In another words, this is a measure induces
a homogeneous conditional measure on the any measurable subpartition of $\xi$. This is equivalent to the fact
that this measure is invariant under the adic transformation (see \cite{V81}) or any group of the transformations
whose partition on the orbits coincides with partition $\xi$.
\end{definition}

Recall that the trace on $C^*$-algebra $A$ is positive definite linear functional $\varphi: \varphi(hh^*)\geq 0)$,
which has property: $\varphi(hgh^*)=\varphi(g), \varphi(e)=1$ for all $g,h \in A$. If $A$ is a group algebra then
trace define a normalized character on the group.

\begin{theorem}(\cite{SV,VK})
There is a canonical one-to-one correspondence between the traces on the algebra  $C^*(\Gamma)$ and central measure
on $T(\Gamma)$.
\end{theorem}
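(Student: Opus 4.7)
The plan is to exploit the skew-product decomposition $C(\Gamma)\simeq GZ(\Gamma)\rtimes G$ recalled in the previous paragraph, where $G$ is a countable group of tail-preserving transformations of $T(\Gamma)$ whose orbit partition coincides with the tail partition $\xi$. The canonical bijection will send a trace $\varphi$ on $C^*(\Gamma)$ to the Borel probability measure $\mu$ on $T(\Gamma)$ obtained by restricting $\varphi$ to the commutative subalgebra $GZ(\Gamma)$ and invoking Gelfand--Riesz duality.

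In the direction trace $\Rightarrow$ central measure, take a trace $\varphi$ on $C^*(\Gamma)$. Its restriction to the $C^*$-closure of $GZ(\Gamma)$, which is just $C(T(\Gamma))$, is a state, hence corresponds to a Borel probability measure $\mu$. For each $g\in G$ let $u_g$ denote the unitary (or partial isometry) in $C(\Gamma)$ implementing $g$; then the trace identity $\varphi(u_g f u_g^*)=\varphi(f)$ for $f\in GZ(\Gamma)$ is equivalent to $g_*\mu=\mu$. Since the $G$-orbits are exactly the classes of $\xi$, joint $G$-invariance is the same as $\xi$-invariance, which by the Definition above is precisely centrality of $\mu$.

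For the reverse direction, given a central measure $\mu$, each $a\in C(\Gamma)$ has a unique finite expansion $a=\sum_{g\in G}f_g u_g$ with $f_g\in GZ(\Gamma)$, and I set
\[ \varphi(a)=\int_{T(\Gamma)} f_e\,d\mu. \]
The trace property $\varphi(ab)=\varphi(ba)$ reduces, via the commutation relation $u_g f u_g^{-1}=f\circ g^{-1}$, to $G$-invariance of $\mu$, while positivity $\varphi(a^*a)\geq 0$ is established by expanding $a^*a$ in the skew-product basis and using homogeneity of the conditional measures of $\mu$ on the finite blocks refining $\xi$. A standard norm estimate extends $\varphi$ continuously to the $C^*$-completion $AF(\Gamma)$, and by construction this procedure inverts the previous one.

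The main obstacle is positivity of $\varphi$. Term by term the diagonal extract $f_e$ of $a^*a$ is not obviously non-negative: one must combine the $(g,g)$-diagonal contributions with off-diagonal pairs $(g,h)$ whose supports meet on a common finite orbit of $\xi$, and only the equality of conditional masses guaranteed by centrality produces a non-negative integrand. This is the one place where the hypothesis on $\mu$ is used essentially, and once it is settled, the two assignments are evidently mutually inverse and the canonical bijection of the theorem follows.
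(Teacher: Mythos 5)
Your argument follows exactly the route the paper itself sketches: restrict the trace to the Gelfand--Zetlin subalgebra to obtain a Borel probability measure on $T(\Gamma)$, deduce centrality from invariance of the trace under inner automorphisms (equivalently, invariance under the tail-preserving group $G$ of the skew-product decomposition), and conversely reconstruct the trace from a central measure via the diagonal expectation on the skew product. The paper delegates the verification (in particular positivity of the reconstructed functional) to \cite{SV,VK}; your discussion of where centrality is needed in that step is a correct filling-in of the same outline.
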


 This identification is defined by the restriction of the trace on $C^*(\Gamma)$ to the Gelfand-Zetlin subalgebra
 $GZ(\Gamma)$, and trace on this commutative subalgebra is a Borel probability measure on the spectra e.g.
 on the space $T(\Gamma)$. It is easy to check that this measure must be central because of invariance of the trace
 with respect to inner automorphisms. And vice versa, each central measure generates trace as measure-type trace
 skew-product algebras.

The $C^*(\Gamma)$ --- group algebra of the locally-finite group --- is  $AF$-algebra. In this can we can apply
 the theorem.  For example the infinite symmetric group has Young graph of the Young diagrams as branching graph
 of the simple modules and then the central measures are normalized character of the group and vice versa.

We have two identical problems:

\textbf{ To describe the set of ergodic central measures for given Bratteli diagram, or equivalently
to describe all invariant measures for the tail partition of a cylindric filtration of the Cantor set.}
\medskip

and

\medskip

 \textbf{To describe the set of indecomposable traces for given $AF$-algebra.}

As a special case of the last problem we have the following problem:

\medskip
\textbf{To describe the set of the normalized indecomposable characters of the given locally finite group.}

From point of view of representation theory the normalized traces on the $C^*$-algebras generate the representations
of the $C^*$-algebra (group) of finite type in the sense of von Neumann: either type $I_n$ or type $II_1$.
The same is true for normalized characters on the countable groups: if the trace or character is indecomposable
(which means that the central measure is ergodic), then it is factor-representation of the group is of type $I_n$
or type $II_1$ and vice versa.

The decomposable trace or character decomposable can be represent as integral over indecomposable ones,
this decomposition corresponds to central decomposition of the von Neumann algebra of finite type; in terms of
invariant measure it corresponds to ergodic decomposition of invariant measures.

\bigskip

\section{DEFINITIONS OF COMPLETELY SMOOTH AND COMPLETELY NON-SMOOTH $AF$-ALGEBRAS AND GROUP ACTIONS.}

\subsection{Main definition}
 The set of all normalized traces of the $C^*$-algebra is convex compact in the weak topology on the state space,
 (=normalized linear positive functionals on the algebra), moreover it is affine simplex. The extremal points
 of this compact is indecomposable traces which means can't be proper convex combination of the other traces.
 The set of invariant probability Borel measures on the topological compact $X$ under the action of the countable
 group $\Gamma$ is also affine compact simplex and the set of ergodic measures coincide with its Choquet boundary.

 Recall that Poulsen simplex is affine compact simplex in which the Choquet boundary (=the set of extremal points)
 is every where dense in the simplex. Such a simplex is unique up-to affine isomorphism (universality of Poulsen simplex).
 The simplex with closed Choquet boundary called as Bauer simplex, and there are uncountable many non-isomorphic
 Bauer simplecies.

 Denote as $Tr( {\cal A})$ the simplex of all traces on the $C^*$-algebra $\cal A$ and the set of indecomposable traces
 as $ExTr( {\cal A})$ traces (=Choquet boundary of that simplex). In parallel, denote the set of invariant measure on the
Cantor set $K$ with hyperfinite Borel equivalence relation with countable blocks -- $\xi$ ---as  $Inv(K,\xi)$;
 denote the set of the ergodic measures of $Inv(K,\xi)$ as $Erg(K,\xi)$. Partial case -  $\xi$ is the orbit
 equivalence relation for the measure preserving action of the countable group. We consider only hyper-finite $\xi$
 which means that group (if acts freely) is amenable.

\begin{definition}
$AF$-algebra $\cal A$ is called completely non-smooth if the set of normalized traces $Tr( {\cal A})$ as affine 
compact is Poulsen simplex. We say that $\cal A$ is completely smooth if the Choquet boundary $ExTr( {\cal F})$ is open
subset in its weak closure.

 The hyper-finite equivalence relation $\xi$ of the Cantor space $K$ called completely non-smooth if $Inv(K,\xi)$
 is Poulsen simplex.  We say that equivalence relation $\xi$ (or action of the group with this orbit equivalence 
 relation) is completely smooth if the set of ergodic measures (Choquet boundary) $Erg(K,\xi)$ is open subset 
 in its weak closure in $Inv(K,\xi)$. \footnote{We omit some time
the word "completely" in all definitions. It needs strictly speaking only if we consider also the intermediate
case -when $AF$-algebra action of the group) has simultaneously both parts, but we will not consider intermediate
cases here.}
\end{definition}

\subsection{Some remarks}

The condition:  "The set $ExTr(A)$ (correspondingly $Erg(K,\xi)$) is weakly closed in $Tr(A)$
(correspondingly in $Inv(K,\xi))$" e.g. the simplex is Bauer simplex, --- seems too strong for our goals.
In another words we allow the cases which is often ha been appeared when there are some (not so many)
decomposable traces (correspondingly- non-ergodic measures) which is a limit of indecomposable traces (ergodic measures).
In our Conjecture --- see below --- we assume just the case of the weak closure. I do not know name for the simplex
with that property weaker that Bauer property.

\medskip
The definition above put the main problem: how to distinguish those two cases in both situations in concrete examples,
how to verify that given $AF$-algebra (given equivalence relation or action) belongs to one of two classes.
We will illustrate such kind of problems below. The most interesting example concerns of Young diagrams
of different dimensions, distributive lattices and special graphs.

The smoothness of the description above means that we can give a good parametrization of the set of indecomposable
traces (correspondingly - ergodic invariant measures). But of course, to find such parametrization is nontrivial
problem.

\subsection{Connection with other topics: boundary, classification of equivalence relations }

Remark that the definition of the ergodic central measures includes the information about {\it co-transition probability
of the Markov chain which is  unform distribution}. It does not include the transition probabilities of
Markov chain and those probabilities could be very different. In this sense our problem
is NOT the problem of the calculation of Poisson-Furstenberg boundary (see \cite{V2000}),
 rather -close to calculation of Martin boundary. We can say that the problem about description
of central measure is the problem about description of all transition probabilities. But the attempt of direct
calculation of it is not fruitful approach because it is too cumbersome. Centrality of measures (or invariance)
can be generalized to the problem "to find all measures with given cocycle" - in our case this cocycle equal to $1$.
\footnote{In the theory of the stationary Markov compact the central measures called "measure with maximal entropy"}

 In measure theoretical category all ergodic homogeneous hyper-finite partitions are isomorphic (Dye's theorem and lacunary isomorphism theorem).
In topological category it is not true:we have various tail partitions which are not (even Borel) isomorphic; they have the different structure
of the set of invariant measures. The result in \cite{AK} claim that the cardinality of the number of invariant
measures is complete invariant of Borel hyperfinite minimal equivalence relation upto Borel equivalence
\footnote{I am grateful to professor A.Kechris for this reference.}. So we need in formulation intermediate
category between measure-theoretical and borel in which we can take in account some properties of the simplex
of traces (invariant measures). $K$-theory by definition gave us such possibility for $AF$-algebras, but
without concrete tools and this is the main difficulty in our problem.

\subsection{Filtration an standardness}
It seems that more important link of the problem concerns to the theory of filtration.
In our situation we have the following useful fact:

\begin{statement} (\cite{VK})
Each ergodic central measure on the $N$-graded graph $\Gamma$ is a Markov measure with respect to the graded structure
of graph.
\end{statement}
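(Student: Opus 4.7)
\emph{Plan.} I would first unpack centrality at the level of finite cylinders. A finite path $p$ of length $n$ from $\emptyset$ to a vertex $v$ determines a cylinder set $[p]\subset T(\Gamma)$; two finite paths $p_1, p_2$ from $\emptyset$ to the \emph{same} vertex $v$ can be completed by any common infinite tail starting at $v$, and the resulting infinite paths are tail-equivalent. Invariance of $\mu$ under $\xi$ therefore forces $\mu([p_1])=\mu([p_2])$. Hence, writing $d(v)$ for the number of finite paths $\emptyset\to v$ and $\mu_n(v)$ for the distribution of the vertex at level $n$, one obtains
\[
\mu([p])\;=\;\frac{\mu_n(v)}{d(v)}\qquad\text{for every finite path $p\colon\emptyset\to v$.}
\]

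From this uniformity the Markov property of the vertex process $(v_0,v_1,v_2,\dots)$ drops out by a direct ratio. Given a finite path $p=(e_1,\dots,e_n)$ ending at $v$ and an edge $e$ from $v$ to a vertex $u$ at level $n+1$,
\[
\mu(e_{n+1}=e\mid e_1,\dots,e_n)\;=\;\frac{\mu([pe])}{\mu([p])}\;=\;\frac{\mu_{n+1}(u)/d(u)}{\mu_n(v)/d(v)},
\]
which depends only on $(v,u)$ and not on the specific past. Summing over edges from $v$ to a fixed $u$ recovers the transition probability $P(v_{n+1}=u\mid v_n=v)$ of the vertex chain, and conditional independence of the past given $v_n$ is immediate. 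A cylinder/monotone-class approximation then extends Markovness to conditioning on arbitrary measurable events in the past.

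A few remarks on what is really used. Centrality alone already yields the Markov property; ergodicity enters only to guarantee that no further decomposition of $\mu$ over an ``ergodic parameter'' is hiding inside the kernel, so that the transition data genuinely determines $\mu$. The only technical point worth attention is bookkeeping with vertices $v$ of zero mass $\mu_n(v)=0$ and checking that the ratios above assemble into a bona fide Markov kernel on the inverse system of finite levels; this is ensured by Kolmogorov extension together with the fact that the $\mu_n$ are the marginals of $\mu$ by construction. I therefore do not anticipate a deep obstacle: the proposition is essentially a direct unpacking of the definition of a central measure combined with the graded structure of $\Gamma$.
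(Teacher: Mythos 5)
Your argument is correct and is exactly the standard one (the paper itself gives no proof, deferring to \cite{VK}): centrality forces $\mu([p])=\mu_n(v)/d(v)$ for every finite path $p$ ending at $v$, and the ratio $\mu([pe])/\mu([p])$ then depends only on the current vertex, which is the Markov property. Your side remark is also right that centrality alone (not ergodicity) is what yields Markovianity.
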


So we must consider the tail (hyperfinite) equivalence relation and its Markov approximation. This is the question of the theory of
the decreasing sequence of finite partitions (filtration). More exactly we have as a Cantor set
(space of paths of the Bratelli diagram) a Markov (non-stationary)
  compact $\cal X$ with finite state spaces
on each coordinate: $${\cal X}=\{\{x_n\}_{n=0}^{\infty}; x_n \in X_n, |X_n|<\infty, m_{x_n,x_{n+1}}=1, \quad
 M_n=\{m^n_{i,k}\}\in Mat_n\{0;1\}, n \in \Bbb N. \}$$

We have a filtration of the sigma-fields: $\{{\frak A}_N\}$ (which are generated by coordinates with number greater than $N$)
Conjecturally we can formulate the answer on the question about smoothness of  of the tail equivalence relation in terms
standardness of that filtration. Recall \cite{V94}, that stationary Markov filtration is standard. The same is true for
the case of group algebra of ${\frak S}_{\infty}$ -see theorem 2.

\bigskip

\subsection{Link with general theory of $C^*$-algebras}

The notions which was defined make sense for the arbitrary $C^*$-algebras and arbitrary equivalence relations as actions.
So, we can compare our definition with analog in the theory of $C^*$-algebras --- of the sharing of the $C^*$-algebras
on liminaire and anti-liminaire (see \cite{Dix} corresponds to the sharing of the  $AF$-algebras onto

1)$AF$-algebras for which description of the indecomposable traces  is "smooth problem"
(this class contains all liminaire $AF$-algebras)

and

2)other $AF$-algebras (which must be anti-liminaire). Equivalently, we divide the problem of the description
of ergodic invariant measures onto two classes.

It is naturally to compare this question with the question about classification of irreducible representations
of groups or algebras. It is well-known (see\cite{Dix}) that the problem of the classification of all irreducible
$*$-representation for $C^*$-algebra or group up-to unitary equivalence or? equivalently -- the description
of the pure states of the algebra up-to equivalence, --- is tame or "smooth" iff this algebra so called "liminaire".
By classical results by Glimm the problem is wild (not smooth) for "anti-liminaire" algebras.  For countable groups
the analog of this by Thoma's theorem is the following: for the groups which are finite extension of commutative
groups (=virtually commutative) and only those groups the problem of the description of the characters is tame (smooth).
The notion of the smooth and non-smooth $AF$-algebras allow to consider the further classification of
"anti-liminaire" algebras. So the meaningful question is: for what kind of anti-liminaire algebras
(or non virtually commutative groups) the description of the traces (characters) is smooth problem?

\section{EXAMPLES OF THE COMPLETELY SMOOTH CASE: YOUNG DIAGRAMS AND YOUNG GRAPH.}

 It is very easy to give example of completely smooth $AF$-algebra or smooth hyperfinite equivalence relation -
 For example Glimm algebra (UHF) like $\bigotimes_1^{\infty}M_2 {\Bbb C}$ has only one trace,
 and tail equivalence relation in the Cantor space $\prod_1^{\infty} Z/2$ has only one invariant measure (Haar measure).
 In this case  the filtration of tail partitions is standard diadic sequence (\cite{V94}). The Pascal graph (\cite{V11})
 as Bratteli diagram defines well-known $AF$-algebra which is also smooth. Perhaps the Pascal filtration is also standard.
 Here we give more complicate examples and to prove smoothness. Namely we consider the group algebra symmetric group which
 is very popular now and became years ago the starting point for further investigations. The cases of infinite
 unitary and orthogonal groups are also smooth in our sense, --- for all these groups we also have full list
 of the characters.

\smallskip
 Concerning to Young graph that problem about characters of infinite symmetric group had several solutions:
 the first one was pure analytical solution by E.Thoma (\cite{Th}) who posed firstly the question.
 His solution contained noting about combinatorial or group-type feature of the problem.
 The second proof by A.Vershik-S.Kerov (\cite{VK}) used group approximation; the third approach  due to A.Okounkov (1992),
 based on operator theory method. It is interesting that the problem itself in all this cases had considered
 from different point of view: Thoma's formulate it as a problem about description of the positive definite functions
 on infinite symmetric group,  Vershik-Kerov approach was about central measures on the space of the infinite paths
 of Young graphs, the approach suggested by Ol'shansky \cite{Ol} and realized by A.Okounkov \cite{Ok} concerned
 so called admissible representations of the that group. We have used with S.Kerov my ergodic method which gives
 a general approach to similar problems but needs in complicate calculations.

We start with canonical example of the completely smooth case -group algebra of infinite symmetric group.
The group algebra is of course anti-liminaire (no smooth classification of the irreducible representations)
but completely smooth -list of traces has nice parametrization.

Consider Young graph $\textbf{Y}$ as Bratteli diagram of the group algebra of the infinite symmetric group =
${\Bbb C}({\frak S}_{\infty})$. The vertices of the Young graph $\textbf{Y}$ is Young diagrams and number
of cells is graduation; the finite path in Young graph $\textbf{Y}$ is ordinary Young tableaux;
we will consider infinite paths which are infinite Young tableau. On the space of all infinite paths $T(\textbf{Y})$
we have tail equivalence relation, and we can define the set of central measures on  $T(\textbf{Y})$,
which, as we saw, in the canonical correspondence with space of traces on ${\Bbb C}({\frak S}_{\infty})$,
or characters on ${\frak S}_{\infty}$. Ergodic measures corresponds to indecomposable traces (characters).

As we have mentioned the list of indecomposable characters $Char({\frak S}_{\infty})$ of infinite symmetric 
group was found by Thoma \cite{Th} in pure analytical way (see formula below).
Ergodic method \cite{VK} in direct formulation for this case asserts that any ergodic central measure $\mu$
on the space $T(\textbf{Y})$ is weak limit of the sequence of the elementary invariant measures
$m_n(\lambda_n)$ where the last measure is uniform distribution on the the set of all Young tableaux with given
Young diagrams $\lambda_n)$ with $n$ cells. In order to find all ergodic central measures we must consider
all possible sequences of Young diagrams
$\{\lambda_n\}_{n=1}^{\infty}$. The problem is to choose enough such sequences of $\lambda_n)$ in order to obtain
all central measure, and then to make calculations of the weak limits for the obtaining of final Thoma's formula.

The key argument in the proof of \cite{VK} was that it is enough to consider the sequences of $\lambda_n)$
which have frequencies of the growth of rows and columns in the diagrams:
$$\alpha_k=\lim_n\frac{r_k(\lambda_n)}{n}; \quad k \in {\Bbb Z}\setminus 0,$$

where $r_k(\lambda)$ is length of $k$-th row (when $k>0$) and length of $-k$-column (when $r<0$) 
of the diagram $\lambda$.

The explicit formula of the characters (traces) includes only sequence 
$\alpha =\{\alpha_k\}_{k \in {\Bbb Z}\setminus 0}$,
 and looks as a function $\chi$ on the group as follow:

$$ \chi_{\alpha}(g)=\prod_{n>1} s_n(\alpha)^{c_n(g)},$$

where $c_n$ is (finite) number of cycles  of length $n>1$ in the (finite) permutation $g$,
and $$s_n=\sum_{k \in {\Bbb Z}\setminus 0} {sgn(k)}^{(n-1)}\cdot {\alpha_k}^n, \quad n>1$$ (super newtonian sum).

Here $$\alpha=\{\alpha_k\}_{k \in {\Bbb Z}\setminus 0}: \alpha_1\geq \alpha_2 \geq \dots \geq 0, \quad \alpha_{-1}\geq \alpha_{-2} \geq \dots \geq 0; \quad \sum_{k \in {\Bbb Z}\setminus 0} \alpha_k \leq 1$$
Interpretation of the value $ \chi_{\alpha}(g)$ is a measure of fixed points of the action of the element $g$ 
in appropriate measure space depending of $\alpha$ (see \cite{V12}). Most important case is the case 
of $\alpha_k\equiv 0$; in the case $ \chi_{\alpha}(g)=\delta_e$ (delta function at identity element of the group), 
and corresponding central measure on the space of Young tableaux  $T(\textbf{Y})$ is {\it Plancherel measure}. 
The corresponding representation is regular representation of the group ${\frak S}_{\infty}$.

An important observation.

\begin{theorem}
Consider any central measure as measure on Markov compact of the paths (Young tableaux) and look at the
sequence of the sigma-field ${\frak F}_n, n=1\dots$ ( or measurable partitions) generated by the coordinate
of the paths with number more that $n$. Than this sequence (as homogenous partitions)
is standard in the sense of \cite{V94}.
\end{theorem}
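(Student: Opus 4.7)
The plan is to prove Theorem~3 by reducing to the ergodic case, identifying the conditional structure of the filtration on cells of $\xi_n$, and then verifying Vershik's standardness criterion for homogeneous filtrations from \cite{V94}. First I would use the ergodic decomposition of central measures: the set $Tr({\cal A})$ is a simplex, so $\mu$ decomposes as an integral of ergodic central measures; and standardness for a.e.\ ergodic component implies standardness of $\mu$ (since the filtration of each component is the restriction of the original filtration modulo the center-valued disintegration). It therefore suffices to treat an arbitrary ergodic central measure $\mu_\alpha$, indexed by a Thoma parameter $\alpha$ as in Theorem~2. By Proposition~1 such $\mu_\alpha$ is Markov with respect to the grading.

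Next I would describe the cells of $\xi_n$ explicitly. Two infinite tableaux are $\xi_n$-equivalent iff they agree in all coordinates beyond level $n$; so each cell is parametrized by a Young diagram $\lambda_n$ (the shape at level $n$) together with a standard Young tableau of shape $\lambda_n$ filling the first $n$ cells. Centrality forces the conditional measure on each $\xi_n$-cell to be the \emph{uniform} measure on standard Young tableaux of the corresponding shape $\lambda_n$. Hence the filtration $\{{\frak F}_n\}$ is genuinely homogeneous, and standardness reduces to a statement about the asymptotic behavior of uniform measures on $\mathrm{SYT}(\lambda_n)$ as $n\to\infty$ along $\mu_\alpha$-typical sequences of shapes $\lambda_n$.

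Now I would apply Vershik's standardness criterion: the filtration is standard iff for every cylindrical function $f$ depending on finitely many coordinates, the Kantorovich distance between the conditional distributions of $f$ on two typical $\xi_n$-cells tends to $0$ as $n\to\infty$. For the Young graph this can be verified using the ergodic method of \cite{VK}: along $\mu_\alpha$-a.e.\ path the row and column frequencies $r_k(\lambda_n)/n \to \alpha_k$, so the shapes $\lambda_n$ concentrate near a deterministic $\alpha$-profile. Together with the fact (a quantitative refinement of the ergodic method) that the uniform measure $m_n(\lambda_n)$ on $\mathrm{SYT}(\lambda_n)$ converges weakly, as $n\to\infty$, to $\mu_\alpha$ itself whenever $\lambda_n$ has frequencies $\alpha$, this shows the conditional distributions of $f$ on $\xi_n$-cells all converge to a common limit $\mu_\alpha\circ f^{-1}$, with Kantorovich distance shrinking to zero. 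That is the standardness criterion.

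The main obstacle is the quantitative step in the last paragraph: one needs not only that $m_n(\lambda_n) \Rightarrow \mu_\alpha$ for a single sequence with asymptotic frequencies $\alpha$ (which is the content of the ergodic method), but a uniformity/rate estimate for pairs of sequences $\lambda_n, \lambda_n'$ with the same asymptotic frequencies. This is essentially the asymptotic theory of dimensions and hook-length ratios for growing Young diagrams — the same combinatorial estimates that underlie the Vershik--Kerov proof of Thoma's theorem. Once these estimates are in hand (either via the hook product, via Okounkov's operator method, or via Biane's limit shape/representation-theoretic asymptotics), the verification of Vershik's criterion is straightforward. Everything else in the argument — ergodic decomposition, the Markov property, uniformity of conditional measures — is essentially automatic from Proposition~1 and Theorem~2.
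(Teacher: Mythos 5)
The paper itself offers no proof of this theorem --- it only remarks that standardness can be detected through the normalized entropy of the filtration --- so your proposal has to be judged on its own terms, and there it has one genuine gap: you have misstated Vershik's standardness criterion, and the condition you actually verify is strictly weaker than standardness. You take the criterion to be: for every cylindrical $f$, the conditional distributions of $f$ on two typical cells of $\xi_n$ become close in the ordinary Kantorovich metric. But that condition is essentially equivalent to triviality of the tail sigma-field (it follows from reverse martingale convergence for any ergodic central measure), and it is equally satisfied by the completely non-smooth example of Section 5: the Bernoulli measure for $G=\sum_1^{\infty}Z/2$ acting on $2^G$ is tail-trivial, the conditional distributions of cylindrical functions on typical cells converge to a common limit, and yet Theorem 6 asserts (and \cite{V94} proves) that this filtration is \emph{not} standard and even has positive entropy. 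So your argument, as written, would ``prove'' standardness there too, which is a contradiction. The actual criterion in \cite{V94} involves the \emph{iterated} Kantorovich metric: at level $n$ one compares not the distributions of $f$ on two cells but the whole hierarchy of conditional measures (measures on spaces of measures, iterated along the tree of the filtration down to level $n$), and it is exactly this iterated structure that separates standard from non-standard filtrations. Verifying it for the Young graph requires controlling how the uniform measure on standard tableaux of shape $\lambda_n$ disintegrates over the sub-diagram structure at \emph{all} intermediate levels simultaneously, i.e.\ genuinely finer dimension and hook-length asymptotics than the first-order frequency convergence $r_k(\lambda_n)/n\to\alpha_k$ that you invoke.

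The rest of your architecture is reasonable and would be the right starting point: the reduction to ergodic components (with the caveat that standardness is only meaningful fiberwise once the tail is nontrivial), the identification of the $\xi_n$-cells with pairs consisting of a shape $\lambda_n$ and a standard tableau of that shape, and the uniformity of the conditional measures forced by centrality are all correct. Note, however, that the filtration is only semi-homogeneous --- the block sizes $\dim\lambda_n$ vary from cell to cell --- which is precisely why the paper emphasizes the need to extend the homogeneous theory; your phrase ``genuinely homogeneous'' glosses over this. An alternative route, closer to what the paper hints at, would be to compute the normalized entropy of the filtration and show that it vanishes; but zero entropy is a necessary, not sufficient, condition for standardness, so that route too ultimately needs the iterated-metric criterion or some substitute for it.
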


It is difficult to explain all the notion here, but the notion of the standard-ness is easy to formulate
when we consider it in the framework of measure theory: the sequence of homogeneous partition
 is standard if normalized entropy equal to zero. As we had noticed in this case it is possible
 to connect the standardness with smoothness.

\section{NON-SMOOTH $AF$-ALGEBRAS AND ACTIONS.}

 The following skew-product gives the example of the "non-smooth" situation: let us consider the group
 $$G=\sum_1^{\infty} Z/2;\quad G=\bigcup G_n; \quad G_n=\sum_1^n Z/2$$ and its  Bernoulli action group $G$
 on $X=2^G=\prod_{g \in G}Z/2 $. The semi-product of the group $G$ and space $X$ is a group
 $$\hat G = G\rightthreetimes 2^G$$ which is locally compact solvable group.
 More convenient instead of $X$ to consider its dense countable subgroup $X_0=\sum_{g \in G}Z/2$
 and the semi-product of the group $G$ and space $X_0$, e.g. group $${\bar G}
 =G\rightthreetimes\sum_{g \in G}Z/2$$ with natural action of $G$ on $X_0$.

 Of course $\bar G$ is locally finite countable solvable group, its group algebra $C(\bar G)$ is locally
 semi-simple algebra, and $C^*(\bar G)$ is its $C^*$-group-algebra, which is an $AF$-algebra.
 Each finite dimensional simple subalgebra is a matrix algebra generated by the irreducible representations
 of $G_n$ on $2^{G_n}$ for some $n \in \Bbb N$. Remark that matrix algebra is semidirect product of the action
 of $G_n$ on the group of character $2^{G_n}$). The result of this is the following theorem:

 \begin{theorem}
 The Bratteli diagram $\Gamma(\bar G)$ of the group algebra $C(\bar G)$ has the following form; the set of vertices
 (simple modules) of the graph of the level $n$ are parameterized by the orbits of the natural action of group
 $G_n$ on the space $2^{G_n}$. The edges of the graph between orbits $O$ of the level $n$ and orbit $O'$ of
 the level $n+1$ of the diagram has multiplicity $0$,$1$ or $2$ as follow: recall that the orbit $O'$
 is non ordered union of two orbits of the level $n$; so multiplicity equal zero, if orbit $O$ is not part
 of the orbit of $O'$; if $O$ is a part of the orbit of $O'$, say $O=O'\cup O''$ then the multiplicity 
 equal $1$ if $O' = O''$, and $2$ if $O'\neq O''$.
 \end{theorem}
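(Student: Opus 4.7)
The plan is to apply the Mackey machine for a semidirect product with abelian normal kernel at each truncation $\bar G_n := G_n \ltimes A_n$ (where $A_n := (\mathbb{Z}/2)^{G_n}$), and then to read the branching off the inclusion $\bar G_n \hookrightarrow \bar G_{n+1}$. First I would observe that $A_n$ is elementary abelian of exponent $2$, hence canonically self-dual under the standard pairing, and the induced $G_n$-action on $\hat A_n$ coincides with the translation action on $A_n$; thus the $G_n$-orbits on $A_n$ and on $\hat A_n$ are the same set-theoretic data, which is what justifies stating the parametrization directly in terms of $A_n$.

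For the vertex parametrization, I would use the Fourier isomorphism $C^*(A_n) \cong \mathbb{C}^{\hat A_n}$ to identify $C^*(\bar G_n) \cong G_n \ltimes \mathbb{C}^{A_n}$, and then decompose this crossed product along the orbit partition of $A_n$ under $G_n$:
\[
G_n \ltimes \mathbb{C}^{A_n} \;\cong\; \bigoplus_{O \in A_n/G_n} \bigl( G_n \ltimes \mathbb{C}^{O} \bigr).
\]
Each summand is the simple block attached to the orbit $O$ (after absorbing the stabilizer contribution into the Morita-equivalent $\mathbb{C}[(G_n)_\chi]$ for $\chi \in O$), which gives the orbit labeling of vertices asserted in the theorem.

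For the edges, I would fix an element $g \in G_{n+1} \setminus G_n$ and decompose $A_{n+1}$ as the internal direct sum $A_n \oplus g A_n$ of $G_n$-submodules that are interchanged by $g$. Because $[G_{n+1}:G_n] = 2$, any $G_{n+1}$-orbit $O'$ on $A_{n+1}$ is, as a $G_n$-set, either a single orbit or the union of exactly two distinct $G_n$-orbits swapped by $g$; this is the ``unordered union $O' = O_1 \cup O_2$'' of the statement. By Mackey's restriction formula applied to the inclusion $\bar G_n \hookrightarrow \bar G_{n+1}$, the multiplicity with which a level-$n$ vertex $O$ appears in the restriction of the level-$(n+1)$ vertex $O'$ equals the number of the summands $O_1, O_2$ whose projection onto the first $A_n$-component lands in $O$. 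The index-$2$ step then forces this count to be $0$, $1$, or $2$: the value $1$ occurs when $O_1 = O_2$ and both project onto $O$, and $2$ occurs when $O_1 \neq O_2$ but both still project onto the same level-$n$ orbit $O$.

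The main obstacle I expect is verifying that the stabilizer-character data from the Mackey machine collapses cleanly across this index-$2$ step, so that the Bratteli diagram really is indexed only by orbits, with edge multiplicities bounded by $2$. Concretely, one must check that $(G_{n+1})_{\chi'}$ restricts through the inclusion $G_n \subset G_{n+1}$ to $(G_n)_\chi$ for each component $\chi$ of $\chi'$ in a way compatible with the simple-block decomposition on both sides. Once this bookkeeping is in place, the combinatorial core --- the $0/1/2$ dichotomy --- is essentially immediate from the index being $2$ and the splitting $A_{n+1} = A_n \oplus g A_n$.
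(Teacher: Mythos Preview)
The paper does not actually prove this theorem: it is stated immediately after the one-line remark that ``each finite dimensional simple subalgebra is a matrix algebra generated by the irreducible representations of $G_n$ on $2^{G_n}$'' and that ``matrix algebra is semidirect product of the action of $G_n$ on the group of characters $2^{G_n}$''. So there is nothing to compare against beyond that hint, and your Mackey-machine plan is precisely the way to flesh it out. Your observation that $A_n$ is canonically self-dual, so that $G_n$-orbits on $\hat A_n$ and on $A_n$ coincide, is exactly the point the paper is gesturing at.

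There is, however, a genuine gap at the place you yourself flag. The Mackey description of $\mathrm{Irr}(G_n\ltimes A_n)$ gives pairs $(\text{orbit }O,\ \text{character of the stabiliser }(G_n)_\chi)$, and since $G_n$ is abelian the stabilisers can be large (e.g.\ the constant function in $A_n$ has full stabiliser $G_n$). Your sentence ``each summand is the simple block attached to the orbit $O$ (after absorbing the stabiliser contribution\ldots)'' is not correct as written: for nontrivial stabiliser, $G_n\ltimes \mathbb{C}^O$ is only Morita equivalent to $\mathbb{C}[(G_n)_\chi]$, which is \emph{not} simple. A direct count already at $n=1$ gives $|\mathrm{Irr}(\bar G_1)|=5$ (this group is $D_4$) versus only $3$ orbits of $G_1$ on $2^{G_1}$. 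So either the stabiliser labels must be carried along (and the paper's wording read as shorthand), or one must exhibit a specific filtration of $\bar G$ for which the extra labels are absorbed; in either case this step needs real work, not just bookkeeping.

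A second, smaller point: your edge computation treats the step $\bar G_n\subset \bar G_{n+1}$ as if it were a single index-$2$ inclusion, but in fact both the acting group and the abelian kernel grow ($G_n\subset G_{n+1}$ of index $2$, and $A_n\subset A_{n+1}=A_n\oplus gA_n$ of index $|A_n|$). The restriction of a character of $A_{n+1}$ to $A_n$ is the projection onto the first summand, and this has to be combined with Clifford theory for the index-$2$ step on the $G$-side. Your heuristic (``count how many of $O_1,O_2$ project into $O$'') is the right shape, but deriving the $0/1/2$ trichotomy rigorously requires tracking both effects simultaneously, together with the stabiliser characters from the previous paragraph.
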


 It is easy to conclude from this that the space of all paths in the graph $\Gamma(\bar G)$ coincides with group
 $X=2^G$ and partition on the orbits of the action of $G$ on $X$ are the same as the tail partition on the paths.
 The combinatorics of these graph in terms of combinatorics of diadic decreasing sequences of measurable partitions
 has been studied in \cite{V94}.

 This description show us what is the Gelfand-Zetlin algebra GZ of $C(\hat G)$. Observe that the canonical structure 
 of semi-direct product of $AF$-algebra does not coincide in general with the structure of semi-direct product 
 on the group, but in our case this is true. More exactly Gelfand-Zetlin subalgebra $GZ$ of $C(\hat G)$ does 
 coincide with Group algebra of the group $X=2^G$.
  \begin{theorem}
  There is a bijection between the list of the traces of the $AF$-group algebra $C(\bar G)$ of the group $\bar G$
  and the list of Borel probability measures on the space $X=\prod_{g \in G}Z/2$ with respect to the natural action 
  of the group $G$.
  \end{theorem}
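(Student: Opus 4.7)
My plan is to combine the canonical trace/central-measure correspondence stated in Theorem 1 with the explicit description of the Bratteli diagram $\Gamma(\bar G)$ given in the previous theorem. Once these two inputs are in hand, the content of the statement is essentially a translation between three equivalent pictures of the same simplex: traces on $C^*(\bar G)$, central measures on $T(\Gamma(\bar G))$, and $G$-invariant measures on $X = 2^G$.

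First I would apply Theorem 1 to $\Gamma = \Gamma(\bar G)$: the simplex of normalized traces on $AF(\Gamma(\bar G)) = C^*(\bar G)$ is in canonical bijection with the simplex of central (tail-invariant) Borel probability measures on $T(\Gamma(\bar G))$. Next I would identify $T(\Gamma(\bar G))$ with $X$ topologically. Using the previous theorem, a level-$n$ vertex is a $G_n$-orbit in $2^{G_n}$, and the multiplicity rule ($0$, $1$, or $2$) on edges is exactly the branching rule for how an element $x_{n+1} \in 2^{G_{n+1}}$ restricts to an element $x_n \in 2^{G_n}$: the two possible lifts of $x_n$ either lie in the same $G_{n+1}$-orbit (multiplicity $1$) or in distinct ones (multiplicity $2$). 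Consequently, an infinite path together with its edge labels is precisely a compatible sequence $(x_n)$ with $x_{n+1}|_{G_n} = x_n$, and the inverse limit of these data is $X = 2^G$, yielding a homeomorphism $T(\Gamma(\bar G)) \cong X$.

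Third, I would check that under this identification the tail partition $\xi$ on $T(\Gamma(\bar G))$ coincides with the $G$-orbit partition on $X$. The easy direction is that if $y = g \cdot x$ with $g \in G_N$, then for every $n \geq N$ the elements $x|_{G_n}$ and $y|_{G_n}$ lie in the same $G_n$-orbit, so the paths agree on vertices from level $N$ on. The reverse inclusion uses the edge labels: if two paths agree on all vertices past level $N$, then the compatible sequences $(x_n)$ and $(y_n)$ can be assembled into a single element $g \in G_N$ with $y = g \cdot x$, which is the place where the multiplicity-$2$ edges have to be used carefully (they are exactly the cocycle of the $G$-action). Finally, since $G$ acts essentially freely on $X$, a probability measure is central in the sense of Definition 1 iff it is $G_n$-invariant at every level, iff it is $G$-invariant on $X$; composing the three bijections yields the stated correspondence between traces and $G$-invariant measures.

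The main obstacle is the third step — verifying that the tail partition on the multiplicity-labeled path space matches the $G$-orbit partition on $X$. Once this matching is done correctly (using the exact form of the multiplicities from the previous theorem), the remaining steps are essentially bookkeeping: Theorem 1 provides half of the bijection, and the equivalence between centrality and $G$-invariance is straightforward once the orbit equivalence and tail equivalence are identified.
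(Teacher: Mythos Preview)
Your proposal is correct and follows essentially the same route as the paper. The paper does not give a separate formal proof of this theorem; it simply records, in the paragraph immediately after the description of $\Gamma(\bar G)$, that the path space coincides with $X=2^G$ and that the tail partition equals the $G$-orbit partition, and then notes that the Gelfand--Zetlin subalgebra of $C(\bar G)$ is the group algebra of $X$ --- together with Theorem~1 this is exactly your three-step argument. One small remark: the appeal to ``essentially free'' action in your last step is unnecessary; once tail equivalence and $G$-orbit equivalence agree as equivalence relations, centrality and $G$-invariance of a measure are the same condition by definition, regardless of stabilizers.
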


   \begin{corollary}
  The $AF$-algebra $C(\hat G)$ of the group $\hat G$ is non-smooth in the sense of the definition which was given above.
   \end{corollary}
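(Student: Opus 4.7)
The plan is to combine the preceding theorem (the bijection between traces of the $AF$-group algebra $C(\bar G)$ and $G$-invariant Borel probability measures on $X=\prod_{g\in G}Z/2$) with the description of the Bratteli diagram $\Gamma(\bar G)$. Under this bijection indecomposable traces correspond to $G$-ergodic measures, so it suffices to show that the simplex $Inv(X,G)$ of $G$-invariant Borel probability measures on $X$ under the Bernoulli action is a Poulsen simplex, i.e., that $Erg(X,G)$ is weakly dense in $Inv(X,G)$.

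The main step is to produce, for every $\mu\in Inv(X,G)$, a sequence of ergodic $G$-invariant measures $\tilde\mu_n$ converging weakly to $\mu$. For each $n$, let $\mu_n$ denote the marginal of $\mu$ on the finite coordinate block $2^{G_n}$; since $G$ is abelian and $\mu$ is $G$-invariant, $\mu_n$ is $G_n$-invariant. Fix an (infinite) set $D_n\subset G$ of coset representatives for $G/G_n$ and define $\tilde\mu_n$ as the product measure on $X\cong\prod_{d\in D_n}2^{dG_n}$ whose factor on each block $dG_n$ is the translate of $\mu_n$ by $d$. Thus $\tilde\mu_n$ is i.i.d.\ across the blocks with per-block distribution $\mu_n$.

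I would then verify three properties of $\tilde\mu_n$. \emph{$G$-invariance}: translations by elements of $G_n$ act within each block and preserve the per-block measure by $G_n$-invariance of $\mu_n$; translations by the remaining elements of $G$ permute the blocks (possibly composed with internal $G_n$-shifts), and an i.i.d.\ product is invariant under any such permutation. \emph{Ergodicity}: because the blocks form an infinite i.i.d.\ decomposition, Kolmogorov's zero--one law forces the tail $\sigma$-algebra to be trivial, hence every $G$-invariant event has measure $0$ or $1$. \emph{Weak convergence}: a cylinder $C$ depending only on coordinates in a finite set $F\subset G$ is contained in a single block as soon as $G_n\supseteq F$, and for such $n$ one has $\tilde\mu_n(C)=\mu_n(C)=\mu(C)$, which is enough because cylinders generate the topology on $X$.

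The main obstacle is the verification of exact $G$-invariance in the block construction, since translations by $g\in G\setminus G_n$ simultaneously permute the block index set $D_n$ and shift coordinates within individual blocks. Using the abelian structure of $G=\sum_1^\infty Z/2$ one checks that translation by any $g$ decomposes as a permutation of $D_n$ composed with $G_n$-translations block-by-block, so invariance of the product measure reduces to $G_n$-invariance of the identical factors $\mu_n$; a parallel role is played by the infiniteness of $D_n$ in securing ergodicity, which fails for any finite-block truncation. Once this bookkeeping is in place, the sequence $\{\tilde\mu_n\}$ witnesses the density of ergodic measures in $Inv(X,G)$, whence $Inv(X,G)$ is a Poulsen simplex and $C(\bar G)$ is completely non-smooth in the sense of Definition 2.
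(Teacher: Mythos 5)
Your argument is correct, but it takes a genuinely different route from the paper. The paper's proof is a one-line periodic-approximation argument: it exhibits a single everywhere dense family of ergodic invariant measures, namely the finitely supported measures concentrated on the (finite) orbits of characteristic functions of cosets of finite-index subgroups, and also points to the general Glasner--Weiss theorem that $Inv(2^G,G)$ is Poulsen for any non-Kazhdan $G$. You instead approximate an \emph{arbitrary} invariant measure $\mu$ from within: you take its marginal $\mu_n$ on $2^{G_n}$ and rebuild it as an i.i.d.\ product over the cosets $dG_n$, exploiting that $G$ is locally finite so that left translation by any $h\in G$ factors as a permutation of the blocks composed with $G_n$-translations inside each block. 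Your construction buys an explicit ergodic sequence converging to each prescribed $\mu$ (and makes the Markov/filtration structure visible), at the cost of more bookkeeping; the paper's buys brevity and a countable dense set of extreme points. One point in your write-up deserves tightening: a $G$-invariant event is not literally a tail event for the block decomposition, so Kolmogorov's zero--one law does not apply verbatim. The correct justification is the standard one for generalized Bernoulli actions: the induced action of $G$ on the block index set $G/G_n$ has all orbits infinite, so for any cylinder $C$ over finitely many blocks one can find $h$ with $hC$ supported on disjoint blocks, whence $\tilde\mu_n(C\cap hC)=\tilde\mu_n(C)^2$ and the usual approximation argument gives $\tilde\mu_n(A)\in\{0,1\}$ for invariant $A$. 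With that repair the proof is complete and yields the Poulsen property, hence complete non-smoothness via the trace--measure bijection, exactly as the corollary requires.
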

   \begin{proof}
   The simplex of the invariant measures on the space $X=\prod_{g \in G}Z/2$ with respect to action of the group $G$
   is Poulsen simplex because there are everywhere dense set $\delta$ of the discrete invariant measures with finite 
   support, namely, measures whose support is finite set of the characteristic functions of the cosets over subgroups 
   of finite index.
   \end{proof}

   The fact that the action of the for any countable group $G$ which has no Kazhdan property $T$ the simplex of the
   invariant measures with respect to action $G$ in $2^G$ is Poulsen simplex was proved in \cite{GW}. For the group
   $G=\sum_1^{\infty} Z/2$  this is evident. But now we need to identify that simplex with the simplex of the traces
   on group algebra  $C(\hat G)$. Let us identify the central measures as the invariant measures on the group $X$
   with traces and characters on the algebra and group. It is evident that the regular representation of the group 
   $\bar G$ corresponds to Haar (Lebesgue) measure on $X$ and the action of the group $G$ is Bernoulli action 
   of this group with Bernoulli measure $\mu$. Moreover the regular representation in usual formulation 
   (in the space $l^2(\bar G)$) is isomorphic to von Neumann model of $II_1$ factor - as representation of 
   semi-direct product $G\rightthreetimes 2^G$ (with left and right action of the group $\bar G$. A very good 
   exercise is to calculate the {\it Plancherel measure} as measure on the space of orbit, and its when order of 
   orbits tends to infinity.

 Now let us interpret this measure in terms of Markov compact of the sequences of orbits (= edges of Bratteli
 diagram; remember that central measure is a measure on the paths e.g. sequences of edges of that diagram).
 Crucial fact is the following:
 \begin{theorem}
 Consider the image of invariant Bernoulli measure $\mu$ on the space $X=2^G$ as a measure on the paths 
 of the Markov compact (see definition in paragraph 2). Consider the decreasing sequences 
 $\{\xi_n\}_{n=1}^{\infty}$ of the sigma-fields (measurable partitions) where $\xi_n$ is sigma-algebra 
 generated by the coordinates with numbers more than $n$n (correspondingly partition on the paths with the 
 same $n$-tail -- coordinates wit numbers more that $n$)
 Than decreasing sequences $\{\xi_n\}_{n=1}^{\infty}$ is not standard in the sense of \cite{V94}. Moreover, this is
 is example of the decreasing sequences with positive entropy in that sense.
  \end{theorem}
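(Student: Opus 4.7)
The plan is to identify the filtration $\{\xi_n\}$ on the path space with the filtration of $(X,\mu)$ by $G_n$-invariant $\sigma$-algebras, and then to exhibit a positive Vershik entropy for it, coming from the fact that the $G$-action on $(X,\mu)$ is a Bernoulli shift of positive Kolmogorov--Sinai entropy.

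First I would transport $\{\xi_n\}$ through the bijection of Theorem 4. The tail partition corresponds (by that theorem) to the $G$-orbit partition on $X$. Since $G=\bigcup_n G_n$ and since the $n$-th edge of a path encodes the $G_n$-orbit of $x|_{G_n}$ together with the branching data dictated by the multiplicity rule of Theorem 4, a direct unwinding shows that the $\xi_n$-atom of the path attached to $x$ is precisely the $G_n$-orbit of $x$ in $X$ (equivalently, the adic transformation at level $n$ is the $G_n$-action). Viewed as $\sigma$-algebras on $(X,\mu)$, $\xi_n$ is therefore the $\sigma$-algebra of $G_n$-invariant Borel sets, and $\bigcap_n \xi_n$ is the $G$-invariant $\sigma$-algebra, which is $\mu$-trivial by ergodicity of the Bernoulli action. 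Moreover, the stabilizer in $G_n$ of a $\mu$-generic point is trivial, so each $\xi_n$-atom carries $2^n$ points of equal conditional mass and the passage from $\xi_{n+1}$ to $\xi_n$ splits each atom into two halves of equal mass. This presents $\{\xi_n\}$ as a dyadic measurable filtration of exactly the kind studied in \cite{V94}.

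Second I would establish that the scaling (Vershik) entropy of this filtration is positive. The action of $G=\sum_1^{\infty}\Bbb Z/2$ on $(X,\mu)$ is a Bernoulli shift, and $G$ is locally finite, hence amenable, so by Ornstein--Weiss entropy theory the Kolmogorov--Sinai entropy of the action is $\log 2 > 0$. On the other hand, for a standard dyadic filtration Vershik's tower criterion provides, for every finite measurable partition $\alpha$, a sequence of $\xi_n/\xi_{n+1}$-measurable dyadic coordinates along which $\alpha$ can be arbitrarily well approximated by independent fair coins. Applied to the Bernoulli generator $\{\{x(e)=0\},\{x(e)=1\}\}$, such an approximation would realise the $G$-action as a factor of a zero-entropy tower, contradicting the value $\log 2$. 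Hence the tower criterion fails, $\{\xi_n\}$ is not standard, and a quantitative version of the same argument yields a strictly positive lower bound for the Vershik entropy of $\{\xi_n\}$, which is the second assertion.

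The main obstacle is the quantitative matching in the second step: Vershik's entropy of a filtration is not literally the Kolmogorov--Sinai entropy of any invertible factor, so transferring the value $\log 2$ from the $G$-action to the filtration requires a careful implementation of the tower / admissible-metric apparatus of \cite{V94}. The cleanest formulation to aim for is the contrapositive: a standard dyadic filtration can only support, through its $G_n$-invariant $\sigma$-algebras, a group action of zero Ornstein--Weiss entropy; this is where the real work lies, and it is the gap that must be filled to convert the qualitative non-standardness into the stated positive-entropy statement.
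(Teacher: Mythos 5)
Your first step is sound and coincides with what the paper does: the path space of $\Gamma(\bar G)$ is identified with $X=2^G$, and $\xi_n$ with the partition into $G_n$-orbits, so that $\{\xi_n\}$ becomes a homogeneous dyadic decreasing sequence on $(X,\mu)$. At that point the paper essentially stops: it observes that this is literally the example studied in \cite{V94} (where the same dyadic sequence of partitions appears, with no groups or $AF$-algebras in sight) and imports the non-standardness and positive entropy proved there.

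Your second step contains a genuine gap, and it sits exactly where you locate it. The implication ``standard dyadic filtration $\Rightarrow$ the $G$-action it carries has zero Ornstein--Weiss entropy'' is not a citable black box; it is essentially the content of the theorem. The obstruction is that an isomorphism of filtrations only matches the nested partitions: on each $\xi_n$-atom it is determined only up to an arbitrary automorphism of the binary tree of depth $n$, and the $G_n$-action on the atom is extra structure which such an isomorphism is free to scramble. So standardness does not hand you a conjugacy of the $G$-action with the translation action on the standard dyadic space, and the ``zero-entropy tower'' of which the Bernoulli generator is supposed to be a factor is never actually constructed. Quantitatively the naive entropy accounting fails: $\log|\mathrm{Aut}(\hbox{binary tree of depth }n)| = (2^n-1)\log 2$ is of exactly the same order as the Kolmogorov--Sinai quantity $H\bigl(\bigvee_{g\in G_n} g\alpha\bigr) = 2^n\log 2$ you want to exploit, so the bound $H(\hbox{tree type}) \ge H\bigl(\bigvee_{g\in G_n} g\alpha\bigr) - \log|\mathrm{Aut}|$ yields only the constant $\log 2$, not a quantity growing like $|G_n|$, and produces no contradiction with standardness. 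What actually has to be shown (and is shown in \cite{V94}) is that the distribution of $\alpha$-labelled trees \emph{modulo tree automorphisms} fails to concentrate in the iterated Kantorovich metric --- i.e., that two independent samples $x|_{G_n}$, $x'|_{G_n}$ of $2^n$ fair coins typically cannot be matched to within $o(1)$ Hamming density by any automorphism of the tree. That is a combinatorial concentration estimate over the iterated wreath product, not an entropy transfer from the group action. Either carry out that estimate or cite \cite{V94} directly, as the paper does; the KS-entropy route as written does not close.
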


  It is enough to mention that this example literally the same as in the paper \cite{V94},
   but it was considered at that paper in the framework of completely different reasons
   (theory of diadic sequences of measurable partitions, no any groups, $AF$-algebras and so on).

\section{THE CONJECTURES AND THE PROBLEMS.}
 \bigskip

  Classification of $AF$-algebras by the theorem by Elliott and others (\cite{El,ElH}) was reduced to the calculation of the $K_0$-
  functor (Grothendick group of the classes of projective modules) with additional structure of Riesz group
  (ordering by the cone of true modules), and fixed element (one-dimensional free module).
  Sometimes it called as "dimension group"). Unfortunately, it is difficult to apply this very important theorem
  because the calculations are very complicate.
  F.e. for group algebra of infinite symmetric group the answer which was done in \cite{VKK}, is rather cumbersome.
  The answer on the question whether  $AF$-algebra is completely smooth or not of course is contained
  in the $K_0$-functor, but we need to extract it and it is not so easy.\footnote{Remember that the list of traces 
  (invariant measures) gives only a part of invariants of $AF$-algebra, because  $K_0$-functor could have 
  the infinitesimal modules part of which does not separate by the traces.}

  Perhaps, it is difficult to give necessary and sufficient conditions for $AF$-algebras (or for actions of the groups)
  which guaantee the belonging to completely smooth class or to other classes. It is important to have at least
  sufficient conditions for smooth case. Hopefully such a condition gives us the hope to find precise formula 
  for all characters and invariant measures.

   $K$-theoretical approach to $AF$-algebra can be considered as {\it approximating approach} with finite dimensional algebras.
   But there is a dual ("co-approximating") approach, which means that instead of increasing sequences of finite
   dimensional algebras we consider decreasing sequence (filtration) of infinite dimensional algebras.

    In combinatorial term it means that we consider the sequence of the tail sigma-fields of sets of the paths which
    coincide after $n-th$ level, $n=1, \dots$. For given Bratteli diagram $\Gamma$ define this filtration
    (=decreasing sequence of the co-finite partitions)
    as ${\frak A}_n(\Gamma), \quad n=1 \dots.$. For two different Bratteli diagrams of the given $AF$-algebra
    it is easy to formulate what kind of isomorphism of corresponding filtration. Let us call it {\it lacunary isomorphism};
    and consider all such filtrations as filtrations on the standard Cantor space. The intersection of the partitions
    is {\it hyperfinite tail equivalence relation} about which we have discussed in the previous paragraphs.

    \textbf{PROBLEM} To classify the filtrations on the Cantor space up to lacunary Borel equivalence.

    {\it More or less clear that this problem is "wild" in the sense of classification theory. But $K_0$-theory
    shows that the classification can be reformulate in other terms. More important that wildness does
    not contradict to have constructive answer on some natural questions like completely smoothness or non-smoothness.}

    The analog of this Problem in measure theoretical category was done in the papers by author (\cite{V94})
    one of the main result is lacunary theorem which is claim that two ergodic diadic filtrations are lacunary
    equivalent up to measure preserving transformation. But in Borel category it is not true. Moreover as we mentioned
    (in paragrph3.3) that even intersection of the filtration (corresponding equivalence relation) has additional
    invariant (number of invariant measures) see \cite{AK}. But the isomorphism of the equivalence relation is too
    rough because the set of invariant measures could be infinite, nevertheless the $A$-algebras could be smooth or
     non-smooth cases which are not isomorphic in our sense.

    \smallskip

    So we need to develop the analog of the measure-theoretical approach to the filtration in Borel category.
    The main tool in that theory is notion of standardness and criteria of standardness (see \cite{V94})
    That approach was developed mainly for homogeneous partition (like diadic) which means that the blocks of
    $n-th$ partitions in the filtration has the same number of points in almost all blocks and the conditional
    measure is uniform. It is important to generalize the main tools to the case semi-homogeneous partitions
    - the uniform conditional measure on the blocks (centrality), but number of points is arbitrary finite.
   Hopefully, these tools can help to obtain the information about invariant measures and traces of $AF$-algebras.
     Our example of the completely non-smooth action (paragraph 5) shows the connection of non-standardness of the
     tail sequence and non-smoothness of$AF$-algebra.

    \smallskip

    I will consider the analog of measure-theoretical approach elsewhere but here I formulate a Conjecture
    of this type for very concrete case.

  Consider a countable distributive lattice $\Gamma$; by well-known G.Birkhoff's theorem there exist such
  a poset (=partial ordered set) $\zeta$ for which $\Gamma$ is the set of all finite ideals with usual
  order $\Gamma=ID_f(\zeta)$. F.e. Young graph $\Bbb Y$ is a Hasse diagram  for the distributive lattice
  of finite ideals of the poset ${\Bbb Z}_+^2$ \footnote{Each distributive lattices is a ${\Bbb N}$-graded poset; Hasse diagram of the graded poset is graded graph vertices of which is elements of poset and edges joins to elements on of which directly preceded to the second element;  Hasse diagram of the graded poset can be considered as Bratteli diagram of some $AF$-algebra.}. The generalization of Young graph is the lattice of finite ideals of the poset  ${\Bbb Z}_+^n, n=3,\dots$ -its Hasse diagram is "Young" graphs of higher order --  ${\Bbb Y}^n$. We can consider the $AF$-algebra which corresponds to is ${\Bbb Y}^n$ --- $A({\Bbb Y}^n)$. For $n>2$ this algebra is not group algebra of any group.
  Suppose $A(\Gamma)$ is $AF$-algebra corresponded to the Hasse-Bratteli diagram of distributive lattice $\Gamma$.

  \begin{conjecture}
 The list of indecomposable traces of $AF$-Algebra  $A(\Gamma)$ is weakly closed and consequently $AF$-algebra
 is completely smooth; In particular the same is true for $A({\Bbb Y}^n)$.
  \end{conjecture}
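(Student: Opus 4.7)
The plan is to combine Birkhoff's representation (so that $\Gamma = ID_f(\zeta)$ for a countable poset $\zeta$) with the ergodic method of Vershik--Kerov used in Theorem 3. An infinite path in the Hasse diagram of $\Gamma$ is then a linear extension of an (at most countable) ideal of $\zeta$, i.e.\ an injection $\phi\colon {\Bbb N}\to\zeta$ whose image $\phi(\{1,\dots,n\})$ is a finite order ideal of $\zeta$ for every~$n$; a central measure is a Borel measure on such fillings that is uniform on the fibers of the map $\phi\mapsto(\phi(\{1,\dots,n\}))_{n}$.

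First I would invoke Proposition~1 and the ergodic method to reduce to studying weak limits of uniform measures $m_n(\lambda_n)$ on linear extensions of ideals $\lambda_n\in ID_f(\zeta)$ with $|\lambda_n|=n$. For any ergodic central measure $\mu$ one extracts from the defining sequence $\{\lambda_n\}$ an \emph{asymptotic profile} $\Phi_\mu$: for each finite down-set $F\subset\zeta$, the limiting frequency along a $\mu$-typical path of the event ``some $|F|$ consecutive steps fill a translate of $F$''. Distributivity of~$\Gamma$, together with the Birkhoff duality between ideals of $\zeta$ and join-irreducibles, forces these frequencies to satisfy a Pieri-type cocycle, so that $\Phi_\mu$ lives in a compact convex subset $\mathcal P(\zeta)$ of a countable product of simplices.

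The core intermediate claim is that $\mu\mapsto\Phi_\mu$ is a homeomorphism between $ExTr(A(\Gamma))$ and $\mathcal P(\zeta)$. Injectivity should follow from Proposition~1 by Markov-reconstructing $\mu$ from its profile using the uniform co-transition kernels. Surjectivity would require a Plancherel-type limit-shape theorem producing, for every $\Phi\in\mathcal P(\zeta)$, a sequence $\lambda_n$ whose uniform linear extensions converge weakly to the corresponding central measure. Given these two facts, compactness of $\mathcal P(\zeta)$ and continuity of $\Phi^{-1}$ yield weak closedness of $ExTr(A(\Gamma))$, which is the completely smooth condition of Definition~2. Specialized to $\zeta={\Bbb Z}_+^2$ this reproduces Thoma's parametrization by $(\alpha_k,\beta_k)$ with its closed simplex structure, and for $\zeta={\Bbb Z}_+^n$ it predicts a multidimensional analogue of Thoma's super-Newtonian parameters indexed by the maximal chains through the origin.

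The main obstacle, and the reason the statement is only conjectural, is the realization/surjectivity step. For the two-dimensional Young graph it rests on hook-length and Frobenius-coordinate combinatorics together with the sharp Vershik--Kerov--Logan--Shepp limit shape, and no comparable theory is available for $ID_f({\Bbb Z}_+^n)$ with $n\geq 3$ --- the Plancherel limit shape for higher-dimensional Young diagrams is itself a well-known open problem. A complementary and perhaps more tractable route, hinted at in Section~3.4, would be to prove that the tail filtration $\{{\frak A}_n\}$ on $T(\Gamma)$ is standard in the sense of \cite{V94}: standardness of these semi-homogeneous filtrations would plausibly rule out the Poulsen phenomenon of Section~5 and so force complete smoothness without producing an explicit parametrization, the analogous zero-entropy criterion being the natural first thing to test on the products of chains.
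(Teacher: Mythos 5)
The statement you are addressing is stated in the paper as a \emph{Conjecture}, and the paper gives no proof of it; it only sketches the same strategy you propose (``Possible proof of this Conjecture must use the list of infinite minimal ideals of the corresponding posets ${\Bbb Z}_+^n$ which give the `frequencies' of minimal ideals for these measures''), and explicitly identifies the sticking point: existence of the frequencies is not difficult, but one must show that the set of frequencies \emph{uniquely determines} the central measure, and there is no analogue of the symmetric-function machinery for $n\geq 3$. So your plan is essentially the author's own plan, and it is honest about being incomplete — but to be clear, it is not a proof and cannot be graded as one.

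The genuine gap is your ``core intermediate claim'' that $\mu\mapsto\Phi_\mu$ is a homeomorphism onto a compact convex set ${\cal P}(\zeta)$: every nontrivial part of it is unestablished. You locate the main obstacle in the surjectivity/realization step, whereas the paper stresses injectivity (that the frequencies of infinite minimal ideals determine the central measure) as the real problem; in fact both directions are open for $n\geq 3$, and injectivity is arguably the more serious one, since without it the weak closure of $ExTr(A(\Gamma))$ could a priori contain decomposable traces with the same profile, which is exactly the Poulsen-type degeneration the conjecture rules out. Your claim that distributivity forces the frequencies into a compact convex ``Pieri-type'' parameter space is also asserted rather than argued — for the Young graph this is the Thoma simplex, but for general $ID_f(\zeta)$ no such structure theorem is known. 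The alternative route via standardness of the tail filtration is likewise only a heuristic in the paper (the implication ``standard $\Rightarrow$ completely smooth'' is itself conjectural there, suggested by one smooth example and one non-smooth example). In short: your proposal reproduces the paper's intended approach and correctly names the obstructions, but both the paper and your write-up leave the statement as an open conjecture.
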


Possible proof of this Conjecture must use the list of infinite minimal ideals of the corresponding posets  ${\Bbb Z}_+^n$ which give the "frequencies" of minimal ideals for these measures. It is not difficult to prove the existence of frequencies, but the problem is to prove that the set of frequencies uniquely defines central measure and consequently the trace. One can apply combinatorial technique of ordinary Young diagram (contents etc.) but there is no tools like Symmetric Functions and no good description of the corresponding $AF$-algebra.

For $n=2$ the Conjecture about Young Graph $Y$ is true -- this is equivalent to the Thoma's theorem \cite{T}
in our formulation \cite{VK}. For $n=3$ the equivalent conjecture was considered by me with S.Kerov in 80-th.

The last remark - all this problems concern to the limit behavior of the product of adjacent matrices of
Markov compact of the paths in Bratteli diagrams; this is the link with the typical problems in statistical physics.

\end{document}